\documentclass[a4paper,autoref]{lipics-v2021}\nolinenumbers

\makeatother
\usepackage{amsmath,amssymb,amsfonts,latexsym}
\usepackage{amsthm}
 
\usepackage{color,graphicx}
\usepackage{url} %,hyperref}
\usepackage{fancybox}
\usepackage{algorithm}
\usepackage[noend]{algpseudocode}
\usepackage{mathtools}
\usepackage{enumerate,xspace}
\usepackage{thm-restate}
\usepackage{siunitx} 
\usepackage{siunitx} 
\usepackage{cleveref}
\usepackage{xcolor}

\hideLIPIcs

\newcommand{\eps}{\varepsilon}
\renewcommand{\epsilon}{\eps}
\newcommand{\etal}{\emph{et al.}\xspace}

\theoremstyle{plain}

\newenvironment{myquote}%
  {\list{}{\leftmargin=4mm\rightmargin=4mm}\item[]}%
  {\endlist}

\renewcommand{\leq}{\leqslant}
\renewcommand{\geq}{\geqslant}

\DeclareMathOperator{\dist}{dist}

\newcommand{\PP}{\mathcal{P}_q}
\newcommand{\LL}{\mathcal{L}_q}

\newcommand{\FF}{\mathbb{F}}
\newcommand{\PG}{\operatorname{PG}}

\renewcommand{\subjclass}[1]{\par\smallskip\noindent\textbf{Mathematics Subject Classification (2020).} #1\par\smallskip}

\counterwithin{lemma}{section}
\counterwithin{theorem}{section}
\counterwithin{proposition}{section}
\counterwithin{claim}{section}
\counterwithin{observation}{section}
\counterwithin{corollary}{section}
\usepackage{amsmath}
\DeclareMathOperator{\girth}{girth}

\title{Sharp quadratic \texorpdfstring{$\chi$}{chi}-binding functions for powers of bipartite graphs}
\author{Arpan Sadhukhan}{Department of Mathematics, Indian Institute of Technology, Dharwad}{ra.a.sadhukhan@iitdh.ac.in}{}{}
\author{Suraj Kumar Sahoo}{Department of Computer Science and Automation, Indian Institute of Science, Bengaluru}{surajks@iisc.ac.in}{}{}

\authorrunning{A.~Sadhukhan, S.~K.~Sahoo} %mandatory. First: Use abbreviated first/middle names. Second (only in severe cases): Use first author plus 'et. al.'
 \Copyright{Arpan Sadhukhan}%mandatory, please use full first names. LIPIcs license is "CC-BY";  http://creativecommons.org/licenses/by/3.0/

\keywords{graph powers, chromatic number, clique number, bipartite graph, chi-boundedness}% mandatory: Please provide 1-5 keywords
\makeatletter
\renewcommand{\subjclassHeading}{%
  \textcolor{lipicsGray}{\fontsize{9}{12}\sffamily\bfseries
    Mathematics Subject Classification (2020)\enskip}%
}

\gdef\@subjclass{05C12, 05C15, 05C50, 05C80}        % <-- put your MSC codes here
\gdef\@ccsdescString{\@subjclass}     % class will print this instead of the red message

\providecommand{\ccsdesc}[2][]{}
\makeatother

\usepackage{textgreek}
\begin{document}

\setcounter{page}{0}
\maketitle
%------------------------------------------------------------------------------------------

%------------------------------------------------------------------------------------------
\begin{abstract}

For every natural number $r\geq 2$, we construct $r^{th}$
powers of bipartite graphs whose chromatic number is quadratic in their clique number, showing that the straightforward quadratic upper bound is best possible. We thereby settle an open problem posed by Chakraborty, Chandran, Jacob and Pillai [J. Graph Theory 112(3) (2026), 235–254] by establishing the sharpness of the quadratic bound for squares of bipartite graphs.
\end{abstract}
%------------------------------------------------------------------------------------------

%\subjclass{05C15, 05C63 }
\section{Introduction}

A proper coloring of a graph $G$ is an assignment of colors to the vertices of $G$ such that adjacent vertices receive distinct colors.  For a graph $G$, the \emph{chromatic number} $\chi(G)$ of $G$ is the minimum number of colors required for a proper coloring of $G$.  In a graph $G$, a clique is a set of pairwise adjacent vertices; the \emph{clique number} $\omega(G)$ is the maximum order of a clique in $G$. 

For a class of graphs $\mathcal{G}$, if there exists a function $f$ such that $\forall G\in \mathcal{G}$, $\chi(G)\leq f(\omega(G))$ then we say that the class of graphs $\mathcal{G}$ is \emph{$\chi$-bounded} and we say that $f$ is the \emph{$\chi$-binding function} of $\mathcal{G}$. Since the chromatic number of a graph is at least its clique number, it is a natural question to ask whether the chromatic number is upper bounded by some function of its clique number. This is precisely the notion of $\chi$-boundedness. Many constructions in the literature demonstrate that the chromatic number of a graph can be arbitrarily larger than its clique number. Therefore, it is clear that the class of all graphs cannot be $\chi$-bounded. But many interesting subclasses do exhibit this property. Scott and Seymour in~\cite{scott2020survey} provide a detailed survey on the $\chi$-boundedness of graphs.

Coloring graph powers (see definition~\ref{def:gpow}), often referred to as distance coloring, has been studied extensively; see the surveys of Kramer and Kramer~\cite{KramerKramer2008} and Cranston~\cite{Cranston2023}. The square $G^2$ is especially well studied. Bounding the chromatic number of the squares of graphs was first studied by Kramer and Kramer~\cite{kramer1969farbungsproblem,kramer1969probleme}. Alon and Mohar~\cite{alon2002chromatic} obtained asymptotically sharp maximum-degree bounds for squares under girth restrictions and also considered extensions to higher powers. More recently, Chakraborty, Chandran, Jacob, and Pillai~\cite{https://doi.org/10.1002/jgt.70014} studied $\chi$-binding functions for squares of bipartite graphs, proved a linear bound for convex bipartite graphs, and asked whether squares of all bipartite graphs admit a subquadratic $\chi$-binding function. Powers of uniform subdivisions were investigated by Anastos, Boyadzhiyska, Rathke, and Rué~\cite{AnastosBoyadzhiyskaRathkeRue2025}. In this paper, we show that the general quadratic upper bound for the $\chi$-binding functions is asymptotically optimal for every fixed power $r\geq 2$ of bipartite graphs.

The following standard observation gives a quadratic upper bound for squares of graphs in terms of their clique number. For a graph $G$, denote the maximum degree of a graph $G$ by $\Delta(G)$. For every vertex $v$ of $G$, the set of all neighbors of $v$ together with $v$ forms a clique in $G^2$, so $\Delta(G)+1\leq \omega(G^2)$ and consequently, $\chi(G^2)\leq \Delta(G^2)+1\leq \Delta^2(G)+1\leq (\omega(G^2)-1)^2+1$. Chakraborty~\etal in~\cite{https://doi.org/10.1002/jgt.70014} observed that the squares of bipartite graphs of girth strictly greater than 6 have a sub-quadratic $\chi$-binding function. In addition, they observed that the question of whether the class of squares of bipartite graphs have a sub-quadratic $\chi$-binding function was still open. We answer this question in the negative. We prove a quadratic lower bound using a class of graphs obtained from the incidence graph of a family of projective planes by performing a combination of deterministic and randomized edit operations on it (see Section~\ref{sec: $r=2$}).

Further, we generalize this result to the $\chi$-binding function of higher powers of bipartite graphs. Our main result shows that the quadratic $\chi$-binding bound is asymptotically best possible for every fixed power of bipartite graphs.

\begin{theorem}\label{thm:main}
For every fixed integer $r\geq 2$, $f_r(t)=\Theta_r(t^2)$,
where
\[
        f_r(t)=\sup\{\chi(G^r):G\text{ is bipartite and }\omega(G^r)\leq t\}.
\]

\end{theorem}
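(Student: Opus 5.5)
The statement has two halves: a universal upper bound $f_r(t)=O_r(t^2)$, and a family of bipartite graphs showing $f_r(t)=\Omega_r(t^2)$. I would prove them separately. The upper bound should be elementary. The lower bound needs a construction, which is the substance of the paper.

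\textbf{Upper bound.} First take $r=2k$ even. For every vertex $w$, the ball $B(w,k)$ is a clique in $G^r$, since any two of its vertices are at distance at most $2k$. The closed $G^r$-neighbourhood of a vertex $v$ is $B(v,2k)$. Every vertex of $B(v,2k)$ lies within distance $k$ of the middle vertex of a shortest path from $v$, and that middle vertex lies in $B(v,k)$. Hence $B(v,2k)\subseteq\bigcup_{w\in B(v,k)}B(w,k)$, which gives $|B(v,2k)|\le \omega(G^r)^2$. Therefore $\chi(G^r)\le\Delta(G^r)+1\le\omega(G^r)^2$, and this part does not use bipartiteness.

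For odd $r=2k+1$ I would use bipartiteness. Let $(A,B)$ be the bipartition. Two vertices on the same side are at even distance, so in $G^r[A]$ they are adjacent iff their distance is at most $2k$, i.e.\ $G^r[A]=G^{r-1}[A]$, and likewise for $B$. Colouring $A$ and $B$ with disjoint palettes gives
\[
\chi(G^r)\le \chi(G^{r-1}[A])+\chi(G^{r-1}[B])\le 2\,\omega(G^{r-1})^2\le 2\,\omega(G^r)^2 .
\]
Here the middle inequality is the even case applied to $r-1=2k$ (with $k\ge 1$), and the last uses $\omega(G^{r-1})\le\omega(G^r)$. This settles $f_r(t)\le 2t^2$.

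\textbf{Lower bound.} I would construct, for infinitely many prime powers $q$, a bipartite graph $G_q$ with
\[
\omega(G_q^r)=O_r(q),\qquad \alpha(G_q^r)\le |V(G_q)|/\Omega_r(q^2).
\]
The starting point is the incidence graph of $\PG(2,q)$: point–line incidence, every vertex of degree $q+1$, girth $6$. For $r=2$, every two points share a line, so the plain incidence graph has a huge clique in its square. I would therefore edit it as the paper indicates. Deterministically, I would split each line vertex into several copies and distribute its incident points among them. Randomly, I would choose the distribution (or keep each incidence with a suitable probability $p$) so that cliques of $G^2$ stay of size $O(q)$. Meanwhile every small set of points should still be pairwise at distance $2$ for $\Omega(q^2)$-fraction reasons, which is what keeps $\alpha$ small.

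For general $r$, I would replace each edge by a path of suitable length, or attach pendant structure, so that distance $r$ in the new graph plays the role distance $2$ played before. I would then re-prove the same two estimates.

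\textbf{Main obstacle.} The clique bound is the delicate step. A clique in $G^r$ is not confined to a single ball, because of "Helly-failure" configurations such as triangles of lines in the plane. One has to show that every set of pairwise-close vertices is essentially contained in $O(1)$ balls of radius $\lfloor r/2\rfloor$, each of size $O(q)$. I would use the girth-$6$ structure of the plane, plus a union bound over the random choices, to rule out large "spread-out" cliques with high probability. For the independence bound I would count, over all candidate independent sets of size $s=Cn/q^2$, the probability that no two members are within distance $r$. The many distance-$r$ pairs forced by the projective plane make this probability small enough for a first-moment argument.
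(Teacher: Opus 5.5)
Your upper bound is correct and is essentially the paper's argument; reducing the odd case to $G^{r-1}$ on each side and using $\omega(G^{r-1})\le\omega(G^r)$ is just a repackaging of Lemma~\ref{lem:upper-odd}. The gap is the lower bound, which is the substance of the theorem. You never fix a construction, and the quantitative idea that makes the paper's construction work is missing. The paper splits each line into exactly two copies and sends each incidence $(p,\ell)$ to the minority copy independently with probability $\rho=q^{-1/3}$. Two copies force, by pigeonhole, every independent set of $\Gamma_q^2[A_q]$ to be an arc. Its $\binom{k}{2}$ pairs then lie on distinct lines, so the non-adjacency events are mutually independent, each with probability $2\rho(1-\rho)$, and the expected number of independent $14$-sets is at most $N^{14}(2\rho)^{91}=o(1)$. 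For cliques, every $4q$-set has $\sum_{\ell:s_\ell\ge 2}(s_\ell-1)\ge s(q+1)-N\ge 3q^2$ forced sign equalities. It is therefore a clique with probability at most $e^{-3\rho q^2}$, which beats the $e^{O(q\log q)}$ union bound. The line side is handled by the dual count, and mixed cliques by $C_4$-freeness. So $\rho$ must satisfy both $\rho q\gg\log q$ \emph{and} $\rho\le q^{-\epsilon}$, whereas you choose the distribution with only cliques in mind. Both natural readings of your sketch fail for the $\alpha$-method. With an unbiased split, $\Gamma_q^2$ restricted to the $q+1$ points of a conic is exactly $G(q+1,1/2)$, so $\alpha\ge(2-o(1))\log_2 q$. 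With your alternative of keeping each incidence with probability $p$, the points deleted from a single line form an independent set of $G^2[A]$. Meanwhile the points that kept all their incidences form a clique of size about $Ne^{-(1-p)q}$. So $\omega(G^2)=O(q)$ forces $(1-p)q\gtrsim\log q$, hence $\alpha\gtrsim\log q$. Either way you get at best $\chi\gtrsim q^2/\log q$.

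Two of your intermediate targets are also misdirected. First, bounding $\alpha(G_q^r)$ over the whole vertex set fails for the split construction. The minority copies $b_{\ell,1}$ have degree about $q^{2/3}$, so with high probability they induce a subgraph of $\Gamma_q^2$ of maximum degree $O(q^{4/3})$ and contain an independent set of size $\Omega(q^{2/3})$. You must instead use $\chi(G^r)\ge |A_q|/\alpha(G^r[A_q])$ on the point side only. Second, no structure theorem for cliques is needed. Taken literally, ``contained in $O(1)$ balls'' is false: a fixed arc of $k\approx q^{1/6}$ points is a clique of $\Gamma_q^2$ with probability $(1-2\rho(1-\rho))^{\binom{k}{2}}=\Omega(1)$, yet every radius-$1$ ball contains at most two of its points. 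For $r\ge 3$ nothing probabilistic has to be redone. For $r=3$, $\Gamma_q$ itself works, since same-side distances are even. For $r\ge 4$, the $L$-subdivision with $L=\lfloor r/2\rfloor$ satisfies $G^r[V(H)]=H^2$ because $2L\le r<3L$, so the point-side bound transfers verbatim. The clique bound there is deterministic: internal vertices of a clique lie on paths of edges that pairwise share an endpoint or have adjacent endpoints, and $C_4$-freeness caps such an edge family at $4\Delta$. Finally, ``infinitely many $q$'' does not by itself give $f_r(t)=\Omega(t^2)$ for every large $t$. You also need monotonicity of $f_r$ and Bertrand's postulate to choose $q\asymp t$.
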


The case $r=1$ is, of course, different: if $G$ is bipartite then $\chi(G)\leq 2$.  Thus theorem~\ref{thm:main} gives the complete asymptotic answer for all nontrivial graph powers of bipartite graphs. 

\begin{remark}
The notion of $\chi$-boundedness is often considered for hereditary graph classes; equivalently, one requires the inequality $\chi(H)\leq f(\omega(H))$ for every induced subgraph $H$ of every graph in the class. The classes considered here need not be hereditary, and hence throughout the paper we use $\chi$-binding function in the pointwise sense above, following Chakraborty et al.~\cite{https://doi.org/10.1002/jgt.70014}.
\end{remark}

\section{Preliminaries}
We first describe a few standard notations used in the paper throughout. All graphs in this paper are finite, simple, and undirected. For a graph $G$, we write $V(G)$ and $E(G)$ for its vertex set and edge set. An independent set is a set of pairwise nonadjacent vertices, and $\alpha(G)$ denotes the maximum size of an independent set. A graph $G$ is bipartite if its vertex set can be partitioned into two independent sets $A$ and $B$; we write $G=(A,B,E)$ when this bipartition is specified.~The girth of $G$, denoted $\girth(G)$, is the length of a shortest cycle in $G$, with $\girth(G)=\infty$ if $G$ is acyclic. If $X\subseteq V(G)$, then $G[X]$ denotes the subgraph induced by $X$. The open and closed neighborhoods of $v$ are denoted by $N_G(v)$ and $N_G[v]=N_G(v)\cup\{v\}$, respectively.  The maximum degree of $G$ is $\Delta(G)$. 

The distance between two vertices $x,y\in V(G)$ is denoted by $\dist_G(x,y)$ and is the length of a shortest $x$-$y$ path in $G$; if $x$ and $y$ lie in different connected components, their distance is infinite.  For $s\geq 0$ and $v\in V(G)$, the ball of radius $s$ around $v$ is $B_G(v,s)=\{x\in V(G):\dist_G(v,x)\leq s\}$.

\begin{definition}[Graph power]\label{def:gpow}
Let $G$ be a graph and let $r\geq 1$ be an integer.  The $r$th power of $G$, denoted by $G^r$, is the graph with vertex set $V(G)$ in which two distinct vertices $x,y$ are adjacent if and only if      $1\leq \dist_G(x,y)\leq r$.
Thus $G^1=G$, and $G^2$ is the square of $G$.
\end{definition}

%\begin{definition}[Line graph]
%The line graph $L(H)$ of a graph $H$ is the graph whose vertices are the edges of $H$.  Two vertices of $L(H)$ are adjacent precisely when the corresponding edges of $H$ share an endpoint.
%\end{definition}

We next define the standard Desarguesian projective plane of order $q$ which we will use for our constructions.

\begin{definition}[The projective plane $\PG(2,q)$]
Let $q$ be a prime power and let $\FF_q$ be the finite field with $q$ elements.  The projective plane $\PG(2,q)$ has:
\begin{itemize}
    \item points equal to the one-dimensional subspaces of $\FF_q^3$;
    \item lines equal to the two-dimensional subspaces of $\FF_q^3$;
    \item incidence given by containment.
\end{itemize}
We write its point set as $\PP$ and its line set as $\LL$.
\end{definition}

Next we state and prove some standard properties of $\PG(2,q)$. We give the proofs for completeness.

\begin{lemma}\label{lem:pgfacts}
The following holds in $\PG(2,q)$:
\begin{enumerate}[(\roman*)]
    \item $|\PP|=|\LL|=q^2+q+1$;
    \item each line contains exactly $q+1$ points;
    \item each point lies on exactly $q+1$ lines;
    \item two distinct points lie on a unique common line;
    \item two distinct lines meet in a unique common point.
\end{enumerate}
\end{lemma}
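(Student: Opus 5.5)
The plan is to reduce every item to linear algebra over $\FF_q$. The main tools are the count of nonzero vectors in a subspace and the dimension formula $\dim(U+W)+\dim(U\cap W)=\dim U+\dim W$ for subspaces $U,W\subseteq\FF_q^3$.

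For (i), I count points directly. Each one-dimensional subspace contains exactly $q-1$ nonzero vectors. Every nonzero vector of $\FF_q^3$ lies in exactly one such subspace. Hence $|\PP|=(q^3-1)/(q-1)=q^2+q+1$.

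For lines, I will set up the bijection $W\mapsto W^\perp$ with respect to the standard symmetric bilinear form $\langle x,y\rangle=x_1y_1+x_2y_2+x_3y_3$. This form is nondegenerate, so $\dim W^\perp=3-\dim W$ and $(W^\perp)^\perp=W$. Therefore $\perp$ is an inclusion-reversing bijection between one- and two-dimensional subspaces, which gives $|\LL|=|\PP|$. The same duality will later let me transfer (iii) and (v) from (ii) and (iv), though I may instead argue them directly.

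For (ii), a line $W$ has dimension $2$, so it contains $q^2-1$ nonzero vectors. Partitioning these into one-dimensional subspaces gives $(q^2-1)/(q-1)=q+1$ points on $W$.

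For (iii), a point $P$ lies on $W$ if and only if $W^\perp\subseteq P^\perp$. Here $P^\perp$ is two-dimensional, so by (ii) it contains exactly $q+1$ one-dimensional subspaces. Since $\perp$ is a bijection, $P$ lies on exactly $q+1$ lines.

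For (iv), let $P\neq P'$ be distinct points. Then $P\cap P'=0$, so $P+P'$ is two-dimensional and contains both points. Any line containing both points contains $P+P'$, and since both have dimension $2$, the line equals $P+P'$. This gives existence and uniqueness.

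For (v), let $W\neq W'$ be distinct lines. Then $W+W'=\FF_q^3$, since it strictly contains a two-dimensional space. The dimension formula then gives $\dim(W\cap W')=2+2-3=1$, so $W\cap W'$ is a point. Any point lying on both lines is contained in $W\cap W'$, and being one-dimensional it equals $W\cap W'$, which gives uniqueness.

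There is no real obstacle here. The only point that needs care is justifying the nondegeneracy and dimension count for $\perp$ used in (i) and (iii). If I want to avoid duality entirely, I can instead prove (iii) by counting the two-dimensional subspaces containing a fixed vector $v\neq 0$. These correspond to the one-dimensional subspaces of the quotient $\FF_q^3/\langle v\rangle\cong\FF_q^2$, and there are $(q^2-1)/(q-1)=q+1$ of them. The same quotient idea also gives $|\LL|$ by double counting incidences: $|\LL|(q+1)=|\PP|(q+1)$.
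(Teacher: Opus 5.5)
Your proposal is correct and follows essentially the same route as the paper. You count nonzero vectors for (i)--(iii), use duality for the line count and for (iii), and use the span and intersection of subspaces (via the dimension formula) for (iv) and (v); the only difference is that you make the duality explicit through $W\mapsto W^\perp$ for the standard nondegenerate form on $\FF_q^3$ (or alternatively via the quotient $\FF_q^3/\langle v\rangle$ and double counting), whereas the paper simply invokes ``vector-space duality''.
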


\begin{proof}
The nonzero vectors of $\FF_q^3$ fall into one-dimensional subspaces, each containing $q-1$ nonzero vectors.  Hence the number of points is $      \frac{q^3-1}{q-1}=q^2+q+1$.

By vector-space duality the number of two-dimensional subspaces, and therefore the number of lines, is the same.

A two-dimensional vector space over $\FF_q$ contains $\frac{q^2-1}{q-1}=q+1$
one-dimensional subspaces, so every line contains $q+1$ points.  The dual statement gives that every point lies on $q+1$ lines.

Two distinct one-dimensional subspaces span a unique two-dimensional subspace, which is the unique line through the corresponding points.  Two distinct two-dimensional subspaces of a three-dimensional vector space intersect in a unique one-dimensional subspace, which is the unique point common to the corresponding lines. 
\end{proof}

\section{Quadratic upper bound for powers of bipartite graphs}\label{sec:upper bound}

In order to prove \cref{thm:main}, we first establish the estimate $f_r(t)=O_r(t^2)$. For even powers, the stronger statement below holds for arbitrary graphs; bipartiteness is needed only for odd powers. The proof is fairly elementary but we give it for completeness.

\begin{lemma}\label{lem:upper-even}
Let $G$ be any graph, let $r=2s$ be an even number with $s\geq 1$, and put $d=\omega(G^r)$.  Then $\chi(G^r)\leq d^2$.
\end{lemma}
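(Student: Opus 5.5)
The plan is to bound $\Delta(G^r)$ by roughly $d^2$ using the fact that balls of radius $s$ are cliques in $G^{2s}$, and then apply greedy colouring. Since $\chi(G^r)\leq \Delta(G^r)+1$, it suffices to show $\Delta(G^r)\leq d^2-1$, i.e.\ that every ball $B_G(v,r)$ has at most $d^2$ vertices.

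\textbf{Step 1 (balls of radius $s$ are cliques).} For any vertex $u$, any two vertices of $B_G(u,s)$ are at distance at most $2s=r$ via $u$. Hence $B_G(u,s)$ is a clique in $G^r$, and $|B_G(u,s)|\leq d$ for every $u$.

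\textbf{Step 2 (covering $B_G(v,2s)$).} Fix $v$ and let $x\in B_G(v,2s)$. Choose a shortest $v$--$x$ path and let $u$ be the vertex on it at distance $\min(s,\dist(v,x))$ from $v$. Then $u\in B_G(v,s)$ and $\dist(u,x)\leq s$, so $x\in B_G(u,s)$. Therefore
\[
B_G(v,r)\subseteq \bigcup_{u\in B_G(v,s)} B_G(u,s),
\]
and by Step 1 this gives $|B_G(v,r)|\leq |B_G(v,s)|\cdot d\leq d^2$.

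\textbf{Step 3 (conclusion).} Since $v\in B_G(v,r)$, the degree of $v$ in $G^r$ is at most $d^2-1$. Greedy colouring then uses at most $\Delta(G^r)+1\leq d^2$ colours.

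The only delicate point is the covering in Step 2, which needs the intermediate vertex $u$ to be chosen within distance $s$ of both $v$ and $x$. This is exactly where $r$ being even is used: for odd $r$ the two radii would be unequal, and the corresponding balls need not be cliques without bipartiteness. Everything else is routine, and no results from earlier in the paper are needed.
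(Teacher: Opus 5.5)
Your proof is correct and follows the paper's argument step for step. Radius-$s$ balls are cliques in $G^{2s}$, and $B_G(v,2s)$ is covered by the radius-$s$ balls centred at points of $B_G(v,s)$, with the intermediate vertex taken on a shortest path. This gives $|B_G(v,r)|\leq d^2$, hence $\Delta(G^r)\leq d^2-1$, and greedy colouring uses at most $d^2$ colours.
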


\begin{proof}
For every vertex $v$, the ball $B_G(v,s)$ is a clique in $G^r$: any two vertices in $B_G(v,s)$ are at distance at most $2s=r$. Hence    $|B_G(v,s)|\leq d$ for every $v\in V(G)$.
Furthermore,
\[
        B_G(v,2s)\subseteq \bigcup_{x\in B_G(v,s)}B_G(x,s).
\]
Indeed, let $u\in B_G(v,2s)$.  If $\dist_G(u,v)\leq s$, take $x=u$.  Otherwise, take $x$ to be the vertex at distance $s$ from $v$ on a shortest $v$-$u$ path.  Then $x\in B_G(v,s)$ and $u\in B_G(x,s)$.

Therefore $|B_G(v,r)|=|B_G(v,2s)|\leq d^2$ for every $v\in V(G)$. The closed neighborhood of $v$ in $G^r$ is precisely $B_G(v,r)$, so $\Delta(G^r)\leq d^2-1$. Hence, $\chi(G^r)\leq d^2$.
\end{proof}

\begin{lemma}\label{lem:upper-odd}
Let $G=(A,B,E)$ be bipartite, let $r=2s+1$ be an odd number with $s\geq 1$, and put $d=\omega(G^r)$. Then $\chi(G^r)\leq 2d^2$.
\end{lemma}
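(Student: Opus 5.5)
We want to show that $\chi(G^r)\le 2d^2$ when $r=2s+1$ and $G=(A,B,E)$ is bipartite. The plan is to color $A$ and $B$ with disjoint palettes of $d^2$ colors each. For this it suffices to show that, for every $v\in A$, the number of vertices of $A$ adjacent to $v$ in $G^r$ is at most $d^2-1$, and symmetrically for $B$. A greedy coloring of $G^r[A]$ then uses at most $d^2$ colors, likewise for $G^r[B]$, and the two palettes together give $2d^2$.

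The point where bipartiteness enters is parity. Two vertices of $A$ are at even distance in $G$, so if they are at distance at most $r=2s+1$ they are in fact at distance at most $2s$. Hence the $G^r$-neighbours of $v$ inside $A$ all lie in $B_G(v,2s)$. I would then repeat the covering argument from \cref{lem:upper-even}. Every $u\in B_G(v,2s)$ lies in $B_G(x,s)$ for some $x\in B_G(v,s)$, which we get by taking $x$ on a shortest $v$-$u$ path at distance $\min(s,\dist_G(u,v))$ from $v$. Therefore
\[
|B_G(v,2s)\cap A|\le \sum_{x\in B_G(v,s)}|B_G(x,s)|.
\]

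It remains to bound both the number of centres $x$ and the size of each ball by $d$. Since $s<r$, each ball $B_G(x,s)$ is a clique in $G^r$: any two of its vertices are at distance at most $2s\le r$. So $|B_G(x,s)|\le d$, and the same holds for $B_G(v,s)$. This yields $|B_G(v,2s)|\le d^2$, and so $v$ has at most $d^2-1$ neighbours in $A$ within $G^r$. (One can even do better: since $2s+1\le r$, the larger set $B_G(v,s)\cup (N_G(\cdot))$ is still a clique, but the plain bound is enough.) The same argument applies to $B$.

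I do not expect any step to be a real obstacle. The only point needing care is the parity step: without bipartiteness, an odd radius $2s+1$ does not split into two balls that are cliques, which is why the argument runs within a single side of the bipartition. The cost of treating the two sides separately with disjoint palettes is the factor $2$.
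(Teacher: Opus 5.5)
Your proof is correct and follows essentially the paper's argument. Parity confines the $G^r$-neighbours of $v\in A$ inside $A$ to $B_G(v,2s)$, the covering by balls $B_G(x,s)$ with $x\in B_G(v,s)$ (each a clique in $G^r$) gives $|B_G(v,2s)|\le d^2$, and disjoint palettes on $A$ and $B$ give $2d^2$. The parenthetical aside about a larger clique is garbled as written (a correct version would be $B_G(v,s)\cup B_G(w,s)$ for an edge $vw$), but you never use it, so the proof is unaffected.
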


\begin{proof}
For every vertex $v$, the ball $B_G(v,s)$ is a clique in $G^r$, because any two vertices in it are at distance at most $2s<r$. Thus $|B_G(v,s)|\leq d$ for every $v\in V(G)$.

Inside one bipartition class, all finite distances are even.  Hence two vertices in $A$ are adjacent in $G^r[A]$ only if their distance in $G$ is at most $2s$.  For $v\in A$, the closed neighborhood of $v$ inside $G^r[A]$ is therefore contained in $B_G(v,2s)$.  By the same argument used in \cref{lem:upper-even}, $|B_G(v,2s)|\leq d^2$.
Thus $\Delta(G^r[A])\leq d^2-1$, so $G^r[A]$ is $d^2$-colorable.  The same argument applies to $G^r[B]$.  Hence by using different sets of colors for $A$ and $B$ gives a proper coloring of $G^r$ with at most $2d^2$ colors.
\end{proof}

\section{Lower bound for squares of bipartite graphs}\label{sec: $r=2$}

This section contains the main probabilistic construction of the paper. We prove the lower-bound direction of \cref{thm:main} for $r=2$ by constructing, for every sufficiently large prime power $q$, a bipartite graph $\Gamma_q$ such that $\chi(\Gamma_q^2)=\Omega(q^2)$ and $\omega(\Gamma_q^2)=O(q)$, and hence $\chi(\Gamma_q^2)=\Omega\bigl(\omega(\Gamma_q^2)^2\bigr)$. This construction also forms the basis of the lower bounds for all higher powers that will be discussed in Section~\ref{sec: higher powers lower bound}.

We begin with the \emph{incidence graph} $G_q$ of the projective plane $\PG(2,q)$. Its two partite sets are the point set $\PP$ and the line set $\LL$, and a point is adjacent to a line precisely when it lies on that line. Alon and Mohar~\cite{alon2002chromatic} observed that $\chi(G_q^2)=q^2+q+1$.
However, $\omega(G_q^2)=\chi(G_q^2)$, so the large chromatic number is entirely accounted for by a clique of the same order. Our construction modifies $G_q$ by randomly splitting every line-vertex into two signed copies.  We will see that such a construction allows us to keep the clique number low while still maintaining a relatively high chromatic number, in particular, this reduces the clique number of the square from order $q^2$ to order $q$, while preserving a point-side induced subgraph whose chromatic number has order $q^2$.

We now describe the construction described above more precisely. Fix a prime power $q$, and let $\PG(2,q)$ be the projective plane of order $q$.  Let $N=q^2+q+1$ be the number of points, and set       $\rho=q^{-1/3}$.
For every incidence $(p,\ell)$ of a point $p$ and a line $\ell$ in $\PG(2,q)$, choose an independent Bernoulli random variable    $\xi(p,\ell)\in\{0,1\}$ with $\Pr[\xi(p,\ell)=1]=\rho$ and        $\Pr[\xi(p,\ell)=0]=1-\rho$. Below, we present the construction of the \emph{split projective-plane graph}, built from the incidence graph of $PG(2,q)$ by combining a randomized procedure with a suitable vertex-splitting operation.
 
%by randomly splitting every line into two \textcolor{magenta}{vertices that we call \textit{signed line-vertices}.} 

\emph{Construction:}\label{def:sppg}
The graph $\Gamma_q=(A_q,B_q,E_q)$ is the following bipartite graph.  The part $A_q$ is the set of points of $\PG(2,q)$ and these vertices are called point-vertices. The other part is
\[
        B_q=\{b_{\ell,i}:\ell\text{ is a line of }\PG(2,q),\ i\in\{0,1\}\}.
\]
The vertex $b_{\ell,i}$ is called the signed line-vertex corresponding to the line $\ell$ and sign $i$.  A point $p\in A_q$ is adjacent to $b_{\ell,i}$ exactly when $p\in\ell$ and $\xi(p,\ell)=i$.

Thus a point is joined to exactly one of the two signed copies of each line through it. Intuitively, this construction is designed so that the square is highly chromatic on the point side but has only linear-size cliques. Below we note some basic properites of the construction.

\begin{lemma}\label{lem:basic-split}
For every outcome of the random choices, $\Gamma_q$ is bipartite, is $C_4$-free, and satisfies $\Delta(\Gamma_q)\leq q+1$.
\end{lemma}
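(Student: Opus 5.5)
The plan is to verify the three properties directly from the construction, using only the incidence facts of \cref{lem:pgfacts}. None of them depends on the outcome of the random variables $\xi$, since each argument uses only the fact that every edge of $\Gamma_q$ comes from an incidence of $\PG(2,q)$.

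\emph{Bipartiteness.} Every edge of $\Gamma_q$ joins a point-vertex $p\in A_q$ to a signed line-vertex $b_{\ell,i}\in B_q$. Hence $A_q$ and $B_q$ are independent sets partitioning $V(\Gamma_q)$, and $\Gamma_q$ is bipartite.

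\emph{Maximum degree.} A point $p$ is adjacent to $b_{\ell,i}$ only if $p\in\ell$ and $i=\xi(p,\ell)$. So for each of the $q+1$ lines through $p$ (\cref{lem:pgfacts}(iii)), $p$ has exactly one neighbour among $b_{\ell,0},b_{\ell,1}$, and therefore $\deg(p)=q+1$. A vertex $b_{\ell,i}$ is adjacent only to points of $\ell$, so $\deg(b_{\ell,i})\leq |\ell|=q+1$ by \cref{lem:pgfacts}(ii). Together these give $\Delta(\Gamma_q)\leq q+1$.

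\emph{$C_4$-freeness.} Since $\Gamma_q$ is bipartite, any $4$-cycle has the form $p\,b_{\ell,i}\,p'\,b_{\ell',j}\,p$ with $p\neq p'$ points and $b_{\ell,i}\neq b_{\ell',j}$. Adjacency gives $p,p'\in\ell$ and $p,p'\in\ell'$. By \cref{lem:pgfacts}(iv), two distinct points lie on a unique common line, so $\ell=\ell'$. It remains to rule out $i\neq j$. Adjacency of $p$ to $b_{\ell,i}$ forces $\xi(p,\ell)=i$, and adjacency of $p$ to $b_{\ell,j}$ forces $\xi(p,\ell)=j$, so $i=j$. Thus the two line-vertices of the cycle coincide, which is a contradiction.

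The only step with any subtlety is this last one. The splitting could in principle create a $4$-cycle through the two copies of the same line, and that is excluded precisely because each point is joined to only one signed copy of each line. Everything else is routine.
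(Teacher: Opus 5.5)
Your proof is correct and follows the same approach as the paper: bipartiteness by construction, the degree count from Lemma~\ref{lem:pgfacts}(ii)--(iii), and $C_4$-freeness from the fact that two points lie on a unique common line. You also state explicitly a step the paper leaves implicit: each point is joined to only one signed copy of each line, so two points cannot share both $b_{\ell,0}$ and $b_{\ell,1}$.
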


\begin{proof}
The graph is bipartite by construction. We know that every point of $\PG(2,q)$ lies on exactly $q+1$ lines. Now observe that if a point $p\in \ell$, then $p$ is adjacent to exactly one corresponding signed line-vertex, so every point-vertex has degree $q+1$.  Every signed line-vertex has degree at most $q+1$.  Hence $\Delta(\Gamma_q)\leq q+1$.

To prove $C_4$-freeness, it is enough to show that two vertices in $A_q$ have at most one common neighbor. We know that two points of the projective plane lie on a unique common line, and therefore they can share at most one signed line-vertex as their neighbor. 

\end{proof}

A set of points in a projective plane is called an \emph{arc} if no three of its points are collinear. For distinct points $x$ and $y$, we will denote by $xy$ the unique line determined by $x$ and $y$ in the projective plane. We next establish the probabilistic properties of $\Gamma_q$ needed for the quadratic lower bound. The constants are chosen for convenience and are not optimized.

\begin{lemma}\label{lem:prob-estimates}
With probability tending to $1$ as $q\to\infty$, the graph $\Gamma_q$ satisfies all of the following:
\begin{enumerate}[(\roman*)]
\item $\alpha\bigl(\Gamma_q^2[A_q]\bigr)\leq 13$;
\item $\omega\bigl(\Gamma_q^2[A_q]\bigr)<4q$;
\item $\omega\bigl(\Gamma_q^2[B_q]\bigr)<4q$.
\end{enumerate}
Consequently, for all sufficiently large prime powers $q$, there is an outcome for which all three properties hold simultaneously.
\end{lemma}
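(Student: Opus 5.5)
The plan is to reduce each property to a statement about the independent variables $\xi(p,\ell)$ and then take a union bound. The starting point is one adjacency criterion. Two distinct points $x,y\in A_q$ have the unique common line $xy$. Their only possible common neighbours are $b_{xy,0}$ and $b_{xy,1}$. So $x\sim y$ in $\Gamma_q^2$ if and only if $\xi(x,xy)=\xi(y,xy)$.

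Dually, two signed line-vertices $b_{\ell,i},b_{m,j}$ are adjacent in $\Gamma_q^2$ if and only if $\ell\neq m$ and $\xi(p,\ell)=i$, $\xi(p,m)=j$ for $p=\ell\cap m$. In particular $b_{\ell,0}\not\sim b_{\ell,1}$. Variables attached to distinct incidences are independent, and we use this throughout.

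\emph{Property (i).} Let $S$ be an independent set of $\Gamma_q^2[A_q]$. On any line $\ell$, the points of $S\cap\ell$ must carry pairwise distinct values of $\xi(\cdot,\ell)\in\{0,1\}$, so $|S\cap\ell|\leq 2$; that is, $S$ is an arc. Hence the $\binom{14}{2}=91$ pairs of a $14$-subset $S$ determine $91$ distinct lines. The non-adjacency events $\xi(x,xy)\neq\xi(y,xy)$ therefore involve disjoint sets of variables and are independent, each with probability $2\rho(1-\rho)\leq 2q^{-1/3}$. (If $S$ is not an arc, the probability is $0$.) A union bound over at most $N^{14}=O(q^{28})$ sets gives $O(q^{28}\cdot q^{-91/3})=O(q^{-7/3})\to 0$.

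\emph{Property (ii).} A union bound over all $4q$-sets still works, because the failure probability for a fixed set will be $\exp(-\Omega(q^{5/3}))$. Fix $K\subseteq A_q$ with $|K|=4q$ and write $k_\ell=|K\cap\ell|$. If $K$ is a clique, then for every line $\ell$ all points of $K\cap\ell$ have equal $\xi(\cdot,\ell)$. The probability of this is $(1-\rho)^{k_\ell}+\rho^{k_\ell}\leq(1-\rho)^{k_\ell-1}$, using $\rho^{k}\leq\rho(1-\rho)^{k-1}$ for $\rho\leq 1/2$. This inequality is the small point that needs care; a crude factor $2$ per line would cost $2^N$ and ruin the union bound. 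Distinct lines use disjoint variables, so
\[
\Pr[K\text{ is a clique}]\leq(1-\rho)^{\sum_\ell (k_\ell-1)_+}.
\]
Double counting gives $\sum_\ell k_\ell=|K|(q+1)$, and at most $N$ lines have $k_\ell\geq 1$. Hence
\[
\sum_\ell(k_\ell-1)_+\geq 4q(q+1)-(q^2+q+1)\geq 2q^2 .
\]
So the probability is at most $\exp(-2q^{5/3})$. Against at most $N^{4q}=\exp(O(q\log q))$ choices of $K$, the total tends to $0$.

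\emph{Property (iii).} This is the dual argument. A clique $Q$ of size $4q$ in $\Gamma_q^2[B_q]$ uses $4q$ distinct lines, since $b_{\ell,0}\not\sim b_{\ell,1}$, each with a prescribed sign $i_\ell$. For a point $p$, let $k_p$ be the number of these lines through $p$. If $k_p\geq 2$, adjacency forces $\xi(p,\ell)=i_\ell$ for each such $\ell$, each with probability at most $1-\rho$. These are distinct independent variables, so the probability is at most $(1-\rho)^{\sum_p (k_p-1)_+}$. The same count, via \cref{lem:pgfacts}, gives $\sum_p(k_p-1)_+\geq 2q^2$. A union bound over at most $(2N)^{4q}$ signed $4q$-sets finishes.

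Finally, the three failure probabilities tend to $0$, so all three properties hold simultaneously with probability tending to $1$. I expect the main obstacle to be (ii) and (iii): a naive union bound over cliques looks hopeless. The fix is the lower bound $\sum(k-1)_+\geq 2q^2$, which gives the per-set probability $\exp(-\Omega(q^{5/3}))$ and beats the $\exp(O(q\log q))$ count.
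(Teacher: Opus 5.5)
Your proposal is correct and follows the paper's proof essentially step for step. The three parts match: for (i), the arc reduction with independent pair events; for (ii), the per-line bound $(1-\rho)^{k}+\rho^{k}\leq(1-\rho)^{k-1}$ combined with the count $\sum_\ell (k_\ell-1)_+\geq 3q^2+3q-1$ and a union bound; and for (iii), the dual forced-incidence argument over at most $\binom{N}{4q}2^{4q}$ signed sets, where your exponent $\sum_p(k_p-1)_+$ is only a harmless weakening of the paper's $\sum_{p:k_p\geq 2}k_p$.
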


\begin{proof}

We first prove~(i). Let \(S\subseteq A_q\) be an independent set in \(\Gamma_q^2[A_q]\). Then no three points of \(S\) are collinear. Indeed, suppose that distinct points \(x,y,z\in S\) lie on a common line \(\ell\). Among the three labels $\xi(x,\ell),\ \xi(y,\ell),\ \xi(z,\ell)\in\{0,1\}$,  two are equal, say \(\xi(x,\ell)=\xi(y,\ell)=i\). By the definition of \(\Gamma_q\), both \(x\) and \(y\) are adjacent to \(b_{\ell,i}\). Hence \(x\) and \(y\) are at distance two in \(\Gamma_q\), and therefore are adjacent in \(\Gamma_q^2[A_q]\), a contradiction. Thus every \(14\)-vertex independent set in \(\Gamma_q^2[A_q]\) is a \(14\)-point arc, that is, a set of \(14\) points with no three collinear.

Fix a \(14\)-point arc \(S\). For each unordered pair \(\{x,y\}\in\binom{S}{2}\), let \(xy\) denote the unique line through \(x\) and \(y\), and define $E_{x,y} := \bigl\{\xi(x,xy)\neq \xi(y,xy)\bigr\}.$ The vertices \(x\) and \(y\) are nonadjacent in \(\Gamma_q^2[A_q]\) if and only if \(E_{x,y}\) occurs. Moreover, $ \Pr(E_{x,y}) = 2\rho(1-\rho) \leq 2\rho.$

Furthermore, the events $ \bigl\{E_{x,y}:\{x,y\}\in\binom{S}{2}\bigr\}$ are mutually independent. Indeed, distinct pairs of points of \(S\) determine distinct lines; otherwise one line would contain at least three points of \(S\). Consequently, the \(2\binom{14}{2}\) incidence random variables appearing in these events are all distinct, and all incidence variables were chosen independently. Therefore \[ \Pr\bigl[S\text{ is independent in }\Gamma_q^2[A_q]\bigr] = \bigl(2\rho(1-\rho)\bigr)^{\binom{14}{2}} \leq (2\rho)^{91}. \] Let \(X\) be the number of \(14\)-vertex independent sets in \(\Gamma_q^2[A_q]\). Since \(|A_q|=N=q^2+q+1\) and \(\rho=q^{-1/3}\), linearity of expectation gives \[ \begin{aligned} \mathbb{E}[X] &\leq \binom{N}{14}(2\rho)^{91} \\ &\leq N^{14}(2q^{-1/3})^{91} \\ &= O\!\left(q^{28-91/3}\right) = O\!\left(q^{-7/3}\right) = o(1). \end{aligned} \] By Markov's inequality, $\Pr(X\geq 1)\leq \mathbb{E}[X]=o(1).$ Hence, with probability tending to \(1\), the graph \(\Gamma_q^2[A_q]\) has no independent set of size \(14\). This finishes the proof of~(i). \\

We next prove (ii). It is enough to show that, with probability tending to $1$, there is no clique of size $4q$ in $\Gamma_q^2[A_q]$, since every clique of size at least $4q$ contains a clique of size exactly $4q$. Fix $S\subseteq A_q$ with $|S|=s=4q$. For a projective line $\ell$, let $s_\ell=|S\cap\ell|$. If $S$ is a clique in $\Gamma_q^2[A_q]$, then for every line $\ell$ with $s_\ell\geq 2$, we have that for all $x,y\in S\cap\ell$, $\xi(x,\ell)=\xi(y,\ell)$.
Indeed, if $x,y\in S\cap\ell$ are adjacent in $\Gamma_q^2[A_q]$, then, since $\ell$ is the unique line containing both $x$ and $y$, they must be adjacent in $\Gamma_q$ to the same signed line-vertex corresponding to $\ell$.

For $m\geq 2$, the probability that $m$ independent $\{0,1\}$-valued Bernoulli random variables with $\Pr[\xi=1]=\rho$ are all equal is $(1-\rho)^m+\rho^m\leq (1-\rho)^m+\rho(1-\rho)^{m-1}=(1-\rho)^{m-1}\leq e^{-\rho(m-1)}$,
where we used $\rho\leq 1/2$ for all sufficiently large $q$. Since the incidence variables corresponding to distinct projective lines are disjoint and hence independent,
\[
\Pr[S\text{ is a clique in }\Gamma_q^2[A_q]]\leq \exp\left(-\rho\sum_{\ell:s_\ell\geq 2}(s_\ell-1)\right).
\]

Since every point lies on exactly $q+1$ lines, we have $\sum_\ell s_\ell=s(q+1)$. Therefore,
\[
\sum_{\ell:s_\ell\geq 2}(s_\ell-1)\geq s(q+1)-N.
\]
Since $s=4q$ and $N=q^2+q+1$, $s(q+1)-N=3q^2+3q-1\geq \frac{sq}{2}$.
Thus, using $\rho=q^{-1/3}$,
\[
\Pr[S\text{ is a clique in }\Gamma_q^2[A_q]]\leq \exp\left(-\frac{s q^{2/3}}{2}\right)=e^{-2q^{5/3}}.
\]
Hence, by the union bound,
\[
\Pr[\omega(\Gamma_q^2[A_q])\geq 4q]\leq \binom{N}{4q}e^{-2q^{5/3}}\leq \exp\left(4q\log\frac{eN}{4q}-2q^{5/3}\right)=o(1),
\]
because $N=q^2+q+1=O(q^2)$, and hence $4q\log\frac{eN}{4q}=O(q\log q)=o(q^{5/3})$.
This finishes the proof of (ii). \\

It remains to prove (iii). As in the proof of (ii), it is enough to show that, with probability tending to $1$, there is no clique of size $4q$ in $\Gamma_q^2[B_q]$. Fix a set $T\subseteq B_q$ with $|T|=s=4q$. If $T$ contains both signed copies $b_{\ell,0}$ and $b_{\ell,1}$ of some projective line $\ell$, then $T$ is not a clique, since these two vertices have disjoint neighborhoods in $\Gamma_q$ and hence are nonadjacent in $\Gamma_q^2[B_q]$. Thus, we may assume that the vertices of $T$ correspond to $s$ distinct projective lines. For every such line $\ell$, let $\sigma(\ell)\in\{0,1\}$ be the sign for which $b_{\ell,\sigma(\ell)}\in T$.

For a point $p$ of the projective plane, let $t_p$ denote the number of underlying lines corresponding to vertices of $T$ that pass through $p$. Suppose that $T$ is a clique in $\Gamma_q^2[B_q]$. If $t_p\geq 2$ and $\ell$ is one of the underlying lines of $T$ through $p$, choose another such line $m\neq \ell$ through $p$. Since $b_{\ell,\sigma(\ell)}$ and $b_{m,\sigma(m)}$ are adjacent in $\Gamma_q^2[B_q]$, they must have a common neighbor in $\Gamma_q$. As two distinct projective lines $\ell$ and $m$ meet in the unique point $p$, this common neighbor must be $p$. Therefore, $\xi(p,\ell)=\sigma(\ell)$ and $\xi(p,m)=\sigma(m)$.

Hence, whenever $t_p\geq 2$, the label $\xi(p,\ell)$ is forced for every underlying line $\ell$ of $T$ passing through $p$.

Let $M=\sum_{p:t_p\geq 2}t_p$ be the number of forced incidences. Now observe that,
\[
M\geq \sum_{p:t_p\geq 2}(t_p-1)
=\sum_{p:t_p\geq 1}(t_p-1)\geq s(q+1)-N,
\]
as every one of the $s$ underlying lines contains exactly $q+1$ points.

Since $s=4q$ and $N=q^2+q+1$, $s(q+1)-N=3q^2+3q-1\geq \frac{sq}{2}$.
Thus, $M\geq \frac{sq}{2}$.

For a fixed signed set $T$, each forced incidence requires a prescribed value of the corresponding Bernoulli random variable. Since $\rho\leq 1/2$ for all sufficiently large $q$, the probability of any prescribed value is at most $1-\rho$. Moreover, the forced incidence variables are distinct and independent. Therefore
\[
\Pr[T\text{ is a clique in }\Gamma_q^2[B_q]]
\leq (1-\rho)^M
\leq e^{-\rho M}
\leq \exp\left(-\frac{s q^{2/3}}{2}\right)
=e^{-2q^{5/3}},
\]
where we used $\rho=q^{-1/3}$ and $s=4q$.

There are at most $\binom{N}{4q}2^{4q}$ possible signed sets $T$ with $4q$ distinct underlying lines. Hence, by the union bound,
\[
\Pr[\omega(\Gamma_q^2[B_q])\geq 4q]
\leq \binom{N}{4q}2^{4q}e^{-2q^{5/3}}
\leq \exp\left(4q\log\frac{eN}{4q}+4q\log 2-2q^{5/3}\right)
=o(1),
\]
because $N=q^2+q+1=O(q^2)$, and hence the positive part of the exponent is $O(q\log q)=o(q^{5/3})$. This finishes the proof of (iii).
\end{proof}

Combining Lemma~\ref{lem:prob-estimates} with the basic properties of $\Gamma_q$, we obtain the following existence theorem, which establishes the lower-bound direction of Theorem~\ref{thm:main} for $r=2$.

\begin{theorem}\label{thm:r=2 lower bound}
For all sufficiently large prime powers $q$, there exists a bipartite $C_4$-free graph $\Gamma_q$ with the following properties:
\begin{itemize}
\item $\Delta(\Gamma_q)\leq q+1$
        
\item $\chi(\Gamma_q^2)\geq \frac{q^2+q+1}{13}$,
        
\item $\omega(\Gamma_q^2)\leq 4q$.

\end{itemize}

\end{theorem}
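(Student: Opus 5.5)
We fix a sufficiently large prime power $q$ and, using the final sentence of Lemma~\ref{lem:prob-estimates}, choose an outcome of the random labels $\xi$ for which properties (i), (ii) and (iii) hold simultaneously. We take $\Gamma_q$ to be the split projective-plane graph for this outcome. Lemma~\ref{lem:basic-split} holds for every outcome, so $\Gamma_q$ is bipartite, $C_4$-free, and satisfies $\Delta(\Gamma_q)\leq q+1$. This settles the first bullet and the structural claims.

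For the chromatic number we use the standard bound $\chi(H)\geq |V(H)|/\alpha(H)$, applied to the induced subgraph $H=\Gamma_q^2[A_q]$. Each color class of a proper coloring of $\Gamma_q^2$, restricted to $A_q$, is an independent set of $H$. By (i), each such class therefore has at most $13$ vertices. Since $|A_q|=q^2+q+1$ by Lemma~\ref{lem:pgfacts}, we get
\[
\chi(\Gamma_q^2)\geq \chi\bigl(\Gamma_q^2[A_q]\bigr)\geq \frac{q^2+q+1}{13}.
\]

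The clique bound is the only step that needs any care. A clique $K$ of $\Gamma_q^2$ may contain vertices from both sides, so we write $K=K_A\cup K_B$ with $K_A\subseteq A_q$ and $K_B\subseteq B_q$. The sets $K_A$ and $K_B$ are cliques in $\Gamma_q^2[A_q]$ and $\Gamma_q^2[B_q]$, respectively. Properties (ii) and (iii) then give only $|K|<8q$, which falls short of the claimed bound $4q$. To recover $4q$, we analyse mixed cliques directly.

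Suppose $K_A\neq\emptyset$ and $K_B\neq\emptyset$. Every point $p\in K_A$ must be at distance at most $2$ from every $b\in K_B$. Distances between the two sides of the bipartite graph are odd, so $p$ and $b$ must in fact be adjacent in $\Gamma_q$. Hence $K_B\subseteq N(p)$ for each $p\in K_A$, and $K_A\subseteq N(b)$ for each $b\in K_B$. By $C_4$-freeness (Lemma~\ref{lem:basic-split}), either $|K_A|\leq 1$ or $|K_B|\leq 1$. In either case the degree bound gives $|K|\leq 1+\Delta(\Gamma_q)\leq q+2\leq 4q$. If instead $K$ lies entirely on one side, then (ii) or (iii) gives $|K|<4q$ directly. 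This yields $\omega(\Gamma_q^2)\leq 4q$ and completes the plan.
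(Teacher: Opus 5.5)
Your proposal is correct and follows essentially the same route as the paper. You fix an outcome from Lemma~\ref{lem:prob-estimates}, bound $\chi$ from below by $|A_q|/\alpha(\Gamma_q^2[A_q])$, and handle mixed cliques by noting that cross-side adjacency in the square is adjacency in $\Gamma_q$; $C_4$-freeness then forces one side to have at most one vertex, so such a clique has size at most $1+\Delta(\Gamma_q)\leq q+2\leq 4q$.
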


\begin{proof}
Choose an outcome satisfying \cref{lem:prob-estimates}; such an outcome exists for all sufficiently large prime powers $q$.  Since $|A_q|=q^2+q+1$ and $\alpha(\Gamma_q^2[A_q])\leq 13$,
\[
        \chi(\Gamma_q^2)
        \geq
        \chi(\Gamma_q^2[A_q])
        \geq
        \frac{|A_q|}{\alpha(\Gamma_q^2[A_q])}
        \geq
        \frac{q^2+q+1}{13}.
\]

We next bound $\omega(\Gamma_q^2)$.  A clique contained in $A_q$ has size less than $4q$ by \cref{lem:prob-estimates}(ii), and a clique contained in $B_q$ has size less than $4q$ by \cref{lem:prob-estimates}(iii).  Finally, consider a clique meeting both $A_q$ and $B_q$.  In a bipartite graph, vertices in opposite parts are adjacent in the square only when they are adjacent in the original graph.  Since $\Gamma_q$ is $C_4$-free, such a mixed clique cannot contain at least two vertices from $A_q$ and at least two vertices from $B_q$: otherwise these four vertices would span a $K_{2,2}$ in $\Gamma_q$ which contradicts the $C_4$-free assertion in Lemma~\ref{lem:basic-split}. Hence a mixed clique has size at most one plus the maximum degree of $\Gamma_q$, and therefore at most $q+2\leq 4q$. It follows that $\omega(\Gamma_q^2)\leq 4q$.
\end{proof}

\begin{remark}
Chakraborty et al.~\cite{https://doi.org/10.1002/jgt.70014} proved that squares of graphs of girth greater than $6$ admit a subquadratic $\chi$-binding function. Consequently, for all sufficiently large $q$, every graph satisfying the properties of Theorem~\ref{thm:r=2 lower bound} must have girth exactly $6$. Indeed, Theorem~\ref{thm:r=2 lower bound} gives $\chi(\Gamma_q^2)\geq \frac{q^2}{13}\geq \frac{1}{208}\omega(\Gamma_q^2)^2$. Moreover, every point-vertex $p\in A_q$ has degree exactly $q+1$ in $\Gamma_q$. Since the closed neighborhood $N_{\Gamma_q}[p]$ forms a clique in $\Gamma_q^2$, we have $\omega(\Gamma_q^2)\geq |N_{\Gamma_q}[p]|=q+2\to\infty$.
Hence $\Gamma_q$ cannot have girth greater than $6$. Since $\Gamma_q$ is bipartite and $C_4$-free, its girth is therefore exactly $6$.

\end{remark}

%\section{\textcolor{magenta}{Proof of theorem 1.1}}

\section{Lower bound for higher powers of bipartite graphs}\label{sec: higher powers lower bound}

Recall that in the preceding section we proved that for every sufficiently large prime power $q$, there exists a $C_4$-free bipartite graph $\Gamma_q=(A_q,B_q,E_q)$ such that
\begin{itemize}
    \item $\Delta(\Gamma_q)\leq q+1$
    \item $\omega(\Gamma_q^2)\leq 4q$
    \item $\chi(\Gamma_q^2[A_q])\geq \frac{q^2+q+1}{13}$.
\end{itemize}
We now use these graphs to obtain quadratic lower bound for the chromatic number in terms of the clique number for every higher power. For $r\geq 3$, we derive the lower bound for the $r^{th}$ power from the square construction by uniformly subdividing every edge of the square example. More precisely, if $L=\lfloor r/2\rfloor$, then $r\in\{2L,2L+1\}$, and distances between the original vertices are multiplied by $L$. Consequently, the subgraph induced by the original vertices in the $r^{th}$ power of the $L$-subdivision is exactly the square of the original graph, so its high chromatic number is preserved. Controlling the clique number is not automatic, since a clique in the $r^{th}$ power may contain internal vertices lying on the subdivided paths corresponding to many distinct edges of the original graph. The proximity of these internal vertices forces the corresponding original edges to be pairwise incident or to have adjacent endpoints, and the $C_4$-free bipartite structure bounds the size of such a family linearly in the maximum degree. The next lemma makes this argument precise.

%For the cube, bipartiteness implies that distances between vertices in the same part are even, and hence $\Gamma_q^3[A_q]=\Gamma_q^2[A_q]$, so the high chromatic number from the square case is preserved.

%For a graph $G$ and an integer $r\geq 1$, recall that the $r$th power $G^r$ has vertex set $V(G)$, with two distinct vertices adjacent whenever their distance in $G$ is at most $r$. 

\begin{definition}[$L$-subdivision]
For an integer $L\geq 1$, the $L$-subdivision $S_L(H)$ is obtained from a graph $H$ by replacing each edge $uv$ with a path of length $L$ joining $u$ and $v$, with the subdivided paths internally disjoint. Thus $S_1(H)=H$.
\end{definition}

\begin{lemma}\label{lem:subdivision-transfer}
Let $H$ be a $C_4$-free bipartite graph of maximum degree $\Delta$, let $L\geq 2$, let $r\in\{2L,2L+1\}$, and put $G=S_L(H)$. Then the following holds:
\begin{itemize}
    \item $G$ is bipartite,
\item $G^r[V(H)]=H^2$

\item $\omega(G^r)\leq \omega(H^2)+4(L-1)\Delta$.
\end{itemize}
\end{lemma}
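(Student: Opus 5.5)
The plan is to treat the three assertions in order. The first two are straightforward distance computations in $G=S_L(H)$; the third needs a structural argument, and that is where the $C_4$-freeness of $H$ enters. I call the vertices of $V(H)$ \emph{branch vertices}, and the $L-1$ internal vertices of the path replacing an edge $e$ of $H$ the vertices of $P_e$.

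\emph{Bipartiteness and $G^r[V(H)]=H^2$.} Every cycle of $G$ is obtained from a cycle of $H$ by replacing each edge with a path of length $L$. Since $H$ is bipartite, every cycle of $G$ has length $L$ times an even number, so $G$ is bipartite. For branch vertices $u,v$, a shortest $u$--$v$ path in $G$ enters an internal vertex of some $P_e$ only to traverse all of $P_e$. Hence $\dist_G(u,v)=L\cdot\dist_H(u,v)$. Since $L\geq 2$ gives $3L>2L+1\geq r$ while $2L\leq r$, we get $\dist_G(u,v)\leq r$ if and only if $\dist_H(u,v)\leq 2$. This is exactly $G^r[V(H)]=H^2$.

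\emph{Reduction for the clique bound.} Let $K$ be a clique of $G^r$, and write $K_0=K\cap V(H)$. By the previous item, $K_0$ is a clique of $H^2$, so $|K_0|\leq\omega(H^2)$. Let $F$ be the set of edges $e$ of $H$ such that $P_e$ contains a vertex of $K$. Each $P_e$ has $L-1$ internal vertices, so $|K\setminus K_0|\leq (L-1)|F|$. It therefore suffices to prove $|F|\leq 4\Delta$.

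First I show that any two edges $e,f\in F$ either share an endpoint or are joined by an edge of $H$. Take $x\in K$ on $P_e$ and $y\in K$ on $P_f$. Any $x$--$y$ path leaves $P_e$ through an endpoint of $e$, which costs at least $1$, and enters $P_f$ through an endpoint of $f$, which costs at least $1$. In between it travels between those endpoints, which costs $L$ times their $H$-distance. If every endpoint of $e$ is at $H$-distance at least $2$ from every endpoint of $f$, then $\dist_G(x,y)\geq 2L+2>r$, a contradiction.

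\emph{Bounding $|F|$; this is the main obstacle.} Fix $uv\in F$. By the property above, every $f\in F$ has an endpoint in $N_H(u)\cup N_H(v)$. At most $2\Delta$ edges of $F$ are incident to $u$ or $v$. It remains to count edges $wy\in F$ with $w\in N(u)\setminus\{v\}$ and $y\neq u$ (and symmetrically on the $v$ side).

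Call $w\in N(u)\setminus\{v\}$ \emph{heavy} if it carries two such edges $wy_1,wy_2$. I claim there is at most one heavy vertex. Suppose $w\neq w'$ are heavy, and let $w'y'$ be one of the edges at $w'$. Bipartiteness rules out $w\sim w'$, so each pair of edges among these must share an endpoint or be joined by an $H$-edge between a vertex on one side and a vertex on the other.
\begin{itemize}
\item If $y'\neq y_1$ and $y'\neq y_2$, then $w\not\sim y'$. Otherwise $y'$ and $u$ would both lie in $N(w)\cap N(w')$, which is a $C_4$.
\item In that case we are forced to have $w'\sim y_1$ and $w'\sim y_2$. Then $w,w'$ share the three neighbours $u,y_1,y_2$, again a $C_4$.
\item If instead $y'$ equals, say, $y_1$, then $y_1\in N(w)\cap N(w')$ besides $u$, which is also a $C_4$.
\end{itemize}
So at most one heavy vertex exists. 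It contributes at most $\Delta$ edges, and every other $w$ contributes at most one, which is at most $\Delta$ more. The $v$ side is symmetric. Hence $|F|\leq 2\Delta+2\Delta=4\Delta$, which gives $\omega(G^r)\leq\omega(H^2)+4(L-1)\Delta$.

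I expect the delicate point to be this last case analysis. In particular, I need to check that it holds for every choice of the F-edge $w'y'$ at the second heavy vertex, including the subcase where $y'$ coincides with $y_1$ or $y_2$.
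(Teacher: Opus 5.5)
There is a genuine gap in the final count. Your route is the same as the paper's: distances scale by $L$, you reduce to $|F|\le 4\Delta$ via the pairwise ``share an endpoint or have adjacent endpoints'' property, then fix $uv\in F$ and use $C_4$-freeness on each side. But the bound $|F|\le 4\Delta$ does not follow from what you wrote.

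You bound the edges of $F$ incident with $u$ or $v$ by $2\Delta$. On the $u$ side you allow one heavy vertex contributing up to $\Delta$ edges, plus one edge from each other $w\in N(u)\setminus\{v\}$, so up to $2\Delta$. The $v$ side is symmetric, another $2\Delta$. These three terms sum to about $6\Delta$, precisely $(2\Delta-1)+2(2\Delta-3)=6\Delta-7$, which exceeds $4\Delta$ once $\Delta\ge 4$. The line $|F|\le 2\Delta+2\Delta$ silently drops one of them. With ``at most one heavy vertex'' as your only structural conclusion, you get only $\omega(G^r)\le\omega(H^2)+6(L-1)\Delta$, which is weaker than the lemma.

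Your own case analysis already proves the stronger statement you need, and it never uses heaviness. Take any two distinct $w,w'\in N(u)\setminus\{v\}$ carrying edges $wy_1,w'y'\in F$ with $y_1,y'\neq u$.
\begin{itemize}
\item If $y'=y_1$, then $u$ and $y_1$ are two common neighbours of $w$ and $w'$, giving a $C_4$.
\item Otherwise $w\sim y'$ is excluded, since $u,y'$ would be two common neighbours. The pairwise property then forces $w'\sim y_1$. Now $u,y_1$ are already two common neighbours of $w,w'$, so $y_2$ is never needed.
\end{itemize}
Hence at most one vertex of $N(u)\setminus\{v\}$ carries any such edge, and the $u$ side contributes at most $\Delta-1$ edges. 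The same holds on the $v$ side, so $|F|\le(2\Delta-1)+2(\Delta-1)\le 4\Delta$. This is exactly the paper's claim that all edges of $\mathcal F_X$ (respectively $\mathcal F_Y$) share a common endpoint.

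The rest of your proof is correct: bipartiteness via cycle lengths, $\dist_G=L\cdot\dist_H$ on branch vertices, the pairwise property, and the reduction $|K\setminus K_0|\le(L-1)|F|$.
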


\begin{proof}
It is easy to see that $G$ is 2-colorable as every edge of $H$ is replaced by a path of the same length $L$, and the original graph $H$ is bipartite, thus the graph $G$ is bipartite.
%Indeed, if $L$ is odd, retain the two colors of the original bipartition and alternate colors along every subdivided path; if $L$ is even, give all original vertices one color and again alternate along every subdivided path. Since the subdivided paths are internally disjoint, these colorings are well defined.

For original vertices $u,v\in V(H)$, every $u$--$v$ path of length $m$ in $H$ lifts to a path of length $Lm$ in $G$. Conversely, every internal vertex of $G$ has degree two and belongs to a unique subdivided path, so a shortest path in $G$ between original vertices traverses each subdivided path that it uses from one endpoint to the other. Contracting those subdivided paths gives a path in $H$. Hence $\dist_G(u,v)=L\dist_H(u,v)$ for all $u,v\in V(H)$.

Since $2L\leq r\leq 2L+1<3L$, the above equality implies that $\dist_G(u,v)\leq r\Longleftrightarrow \dist_H(u,v)\leq 2$, which proves $G^r[V(H)]=H^2$.

Let $C$ be a clique in $G^r$ and put $C_0=C\cap V(H)$. By the equality just proved, $C_0$ is a clique in $H^2$, so $|C_0|\leq \omega(H^2)$. For each edge $e\in E(H)$, let $P_e$ be its subdivided path, and let $\mathcal F$ be the set of edges $e$ for which $C$ contains an internal vertex of $P_e$.

We first show that any two distinct edges in $\mathcal F$ either share an endpoint or have adjacent endpoints in $H$. Suppose otherwise, and choose internal vertices $x\in C\cap V(P_e)$ and $y\in C\cap V(P_f)$. Any $x$--$y$ path must first reach an endpoint $u$ of $e$ and, before reaching $y$, pass through an endpoint $v$ of $f$. The endpoint sets of $e$ and $f$ have distance at least two in $H$, so $\dist_G(u,v)\geq 2L$. Since $x$ and $y$ are internal vertices, $\dist_G(x,u),\dist_G(v,y)\geq 1$. Hence $\dist_G(x,y)\geq 1+2L+1=2L+2>r$, contradicting that $C$ is a clique in $G^r$.

It remains to bound $|\mathcal F|$. If $\mathcal F=\varnothing$, the desired bound is immediate. Otherwise, let $(X,Y)$ be the bipartition of $H$, choose $ab\in\mathcal F$ with $a\in X$ and $b\in Y$, and let $\mathcal F_0$ be the edges of $\mathcal F$ incident with $a$ or $b$. Then $|\mathcal F_0|\leq d_H(a)+d_H(b)-1\leq 2\Delta-1$. Write every edge of $\mathcal F\setminus\mathcal F_0$ as $xy$ with $x\in X$ and $y\in Y$. By the preceding paragraph, either $xb\in E(H)$ or $ay\in E(H)$; both cannot hold, since otherwise $a,y,x,b,a$ is a $4$-cycle. Let $\mathcal F_X$ consist of the edges with $xb\in E(H)$ and $\mathcal F_Y$ of those with $ay\in E(H)$. Thus $\mathcal F_X$ and $\mathcal F_Y$ partition $\mathcal F\setminus\mathcal F_0$.

All edges in $\mathcal F_X$ have the same endpoint in $X$. Indeed, suppose $xy,x'y'\in\mathcal F_X$ with $x\neq x'$. If $y=y'$, then $x,y,x',b,x$ is a $4$-cycle. If $y\neq y'$, then the preceding pairwise property applied to $xy$ and $x'y'$ gives either $xy'\in E(H)$ or $x'y\in E(H)$; in the first case $x,y',x',b,x$ is a $4$-cycle, and in the second case $x',y,x,b,x'$ is a $4$-cycle. Both are impossible. Hence $|\mathcal F_X|\leq \Delta$. By symmetry, $|\mathcal F_Y|\leq \Delta$. Therefore
\[
 |\mathcal F|\leq |\mathcal F_0|+|\mathcal F_X|+|\mathcal F_Y|\leq (2\Delta-1)+\Delta+\Delta\leq 4\Delta.
\]
Each path $P_e$ contains exactly $L-1$ internal vertices, and hence
\[
 |C\setminus C_0|\leq (L-1)|\mathcal F|\leq 4(L-1)\Delta.
\]
Combining this with $|C_0|\leq \omega(H^2)$ proves the claimed clique bound.
\end{proof}

Now we have the necessary estimates and constructions to show the existence of a bipartite graph such that the chromatic number of the $r^{th}$ power of the graph is lower bounded by the square of the clique number of the $r^{th}$ power for all $r\geq 3$. Note that for $r=2$, we have shown its existence in Section~\ref{sec: $r=2$}. The theorem below makes it precise.

\begin{theorem}\label{thm:lower-higher}
For every fixed integer $r\geq 3$ and for every sufficiently large prime power $q$ there exists a bipartite graph $G_{q,r}$ such that
\[
 \chi(G_{q,r}^r)\geq \frac{1}{832\lfloor r/2\rfloor^2}\,\omega(G_{q,r}^r)^2.
\]

\end{theorem}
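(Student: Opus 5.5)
We take $L=\lfloor r/2\rfloor\geq 2$, so that $r\in\{2L,2L+1\}$. Let $H=\Gamma_q$ be the graph of Theorem~\ref{thm:r=2 lower bound}, and put $G_{q,r}=S_L(\Gamma_q)$. By Theorem~\ref{thm:r=2 lower bound}, $\Gamma_q$ is bipartite and $C_4$-free with $\Delta=\Delta(\Gamma_q)\leq q+1$. Therefore Lemma~\ref{lem:subdivision-transfer} applies, and it gives three things: $G_{q,r}$ is bipartite, $G_{q,r}^r[V(\Gamma_q)]=\Gamma_q^2$, and
\[
\omega(G_{q,r}^r)\leq \omega(\Gamma_q^2)+4(L-1)(q+1).
\]

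For the chromatic number, $\Gamma_q^2$ is an induced subgraph of $G_{q,r}^r$. Hence
\[
\chi(G_{q,r}^r)\geq \chi(\Gamma_q^2)\geq \frac{q^2+q+1}{13}\geq \frac{q^2}{13}.
\]

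For the clique number, we combine the lemma with $\omega(\Gamma_q^2)\leq 4q$ to get
\[
\omega(G_{q,r}^r)\leq 4q+4(L-1)(q+1).
\]
We want this to be at most $8Lq$. It suffices that $4(L-1)(q+1)\leq 4(2L-1)q$, i.e.\ $(L-1)(q+1)\leq (2L-1)q$, which holds since $L-1\leq Lq$. In fact the bound $4Lq+4(L-1)\leq 8Lq$ is immediate.

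Squaring gives $\omega(G_{q,r}^r)^2\leq 64L^2q^2$. Together with the chromatic bound,
\[
\chi(G_{q,r}^r)\geq \frac{q^2}{13}\geq \frac{\omega(G_{q,r}^r)^2}{13\cdot 64 L^2}=\frac{\omega(G_{q,r}^r)^2}{832\lfloor r/2\rfloor^2},
\]
which is exactly the claimed inequality, and the constant $832=13\cdot 64$ matches.

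There is no real obstacle here: all the substantive work lies in Lemma~\ref{lem:subdivision-transfer} and Theorem~\ref{thm:r=2 lower bound}. The only points that need care are checking the hypothesis $L\geq 2$, which holds because $r\geq 3$, and carrying out the clique arithmetic so that the constant comes out as $832$.
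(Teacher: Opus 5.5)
For $r\geq 4$ your argument is exactly the paper's, but it does not prove the case $r=3$. There $L=\lfloor 3/2\rfloor=1$, so your remark that ``$L\geq 2$ holds because $r\geq 3$'' is false, and Lemma~\ref{lem:subdivision-transfer} cannot be invoked. This is not a mere technicality. The lemma's proof uses $r\leq 2L+1<3L$, which for $L=1$ reads $3<3$, and its conclusion actually fails. With $L=1$ your graph is $G_{q,3}=S_1(\Gamma_q)=\Gamma_q$, and $\Gamma_q^3\neq\Gamma_q^2$. Indeed, if $pb$ is an edge, $x\in N(p)\setminus\{b\}$ and $y\in N(b)\setminus\{p\}$, then $\dist_{\Gamma_q}(x,y)=3$, since distance $1$ would create the $4$-cycle $p,x,y,b$. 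So $\Gamma_q^2$ is not an induced subgraph of $\Gamma_q^3$, and a clique of $\Gamma_q^3$ need not be a clique of $\Gamma_q^2$ (for example $N(p)\cup N(b)$). Consequently nothing justifies the bound $\omega(\Gamma_q^3)\leq\omega(\Gamma_q^2)\leq 4q$ that your computation implicitly uses at $L=1$.

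The missing case needs a separate, short argument, and this is how the paper handles it. Take $G_{q,3}=\Gamma_q$ and use that all distances inside a bipartition class are even, so $\Gamma_q^3[A_q]=\Gamma_q^2[A_q]$ and $\Gamma_q^3[B_q]=\Gamma_q^2[B_q]$. Then $\chi(\Gamma_q^3)\geq\chi(\Gamma_q^2[A_q])\geq (q^2+q+1)/13$. Any clique $C$ of $\Gamma_q^3$ splits into the cliques $C\cap A_q$ and $C\cap B_q$ of $\Gamma_q^2$, each of size at most $4q$, so $\omega(\Gamma_q^3)\leq 8q$. Since $\lfloor 3/2\rfloor^2=1$, this gives $\chi(\Gamma_q^3)\geq q^2/13\geq \omega(\Gamma_q^3)^2/832$. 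Your final inequality with $8Lq$ therefore does hold at $L=1$, but the clique bound $8q$ must come from this parity argument, not from the subdivision lemma.
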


\begin{proof}
Let $\Gamma_q=(A_q,B_q,E_q)$ be the bipartite graph obtained in Theorem~\ref{thm:r=2 lower bound}.

First let $r=3$ and take $G_{q,3}=\Gamma_q$. Since $\Gamma_q$ is bipartite, distances within either bipartition class are even; hence $\Gamma_q^3[A_q]=\Gamma_q^2[A_q]$ and $\Gamma_q^3[B_q]=\Gamma_q^2[B_q]$. Therefore
\[
 \chi(\Gamma_q^3)\geq \chi(\Gamma_q^2[A_q])\geq \frac{q^2+q+1}{13}.
\]
If $C$ is a clique in $\Gamma_q^3$, then $C\cap A_q$ and $C\cap B_q$ are cliques in $\Gamma_q^2$, so Theorem~\ref{thm:r=2 lower bound} gives $|C\cap A_q|,|C\cap B_q|\leq 4q$. Thus $\omega(\Gamma_q^3)\leq 8q$, and consequently
\[
 \chi(\Gamma_q^3)\geq \frac{q^2}{13}\geq \frac{1}{832}\,\omega(\Gamma_q^3)^2.
\]

Now let $r\geq 4$, put $L=\lfloor r/2\rfloor$, and take $G_{q,r}=S_L(\Gamma_q)$. Then $L\geq 2$ and $r\in\{2L,2L+1\}$. By Lemma~\ref{lem:subdivision-transfer}, $G_{q,r}$ is bipartite and $G_{q,r}^r[V(\Gamma_q)]=\Gamma_q^2$. In particular,
\[
 \chi(G_{q,r}^r)\geq \chi(\Gamma_q^2[A_q])\geq \frac{q^2+q+1}{13}.
\]
The same lemma and Theorem~\ref{thm:r=2 lower bound} give $\omega(G_{q,r}^r)\leq 4q+4(L-1)(q+1)\leq 4L(q+1)\leq 8Lq$.
Therefore
\[
 \chi(G_{q,r}^r)\geq \frac{q^2}{13}\geq \frac{1}{832L^2}\,\omega(G_{q,r}^r)^2,
\]
as required.
\end{proof}

\section{Proof of main theorem}
 For $r\geq 2$ and $t\in\mathbb{N}$, recall that $f_r(t)=\sup\{\chi(G^r):G\text{ is bipartite and }\omega(G^r)\leq t\}$. We now prove the sharp asymptotic estimate for $f_r(t)$ in Theorem~\ref{thm:main} by simply combining the upper and lower bounds obtained in the previous sections.

\begin{proof}[Proof of \cref{thm:main}]
The function $f_r(t)$ is monotone non-decreasing in $t$.  For a fixed value of $r$, from Theorem~\ref{thm:r=2 lower bound} and Theorem~\ref{thm:lower-higher}, we have existence of bipartite graphs $G$ with
$\omega(G^r)\leq C_rq$ and $\chi(G^r)\geq c'_r q^2$
for all sufficiently large primes $q$, where $C_r,c'_r>0$ are constants that only depend on $r$.  

By Bertrand's postulate, for every sufficiently large real $x$ there is a prime $q$ with $x\leq q\leq 2x$.  Given large $t$, choose such a prime with
$\frac{t}{2C_r}\leq q\leq \frac{t}{C_r}$.
Then $\omega(G^r)\leq t$ and $\chi(G^r)\geq c'_r q^2\geq \frac{c'_r}{4C_r^2}t^2$.

Thus $f_r(t)\geq c''_r t^2$ for all sufficiently large $t$.  The upper bound $f_r(t)\leq 2t^2$ for all $t$ follows from Lemma~\ref{lem:upper-even} and Lemma~\ref{lem:upper-odd}. Hence $f_r(t)=\Theta_r(t^2)$.
\end{proof}

%------------------------------------------------------------------------------------------
%\section{Concluding remarks}
%\label{sec:conclusions}
%--------------------------------------------------------------------------
\section{Acknowledgements}
We would like to thank Prof L. Sunil.~Chandran for suggesting us to try this problem. The author Arpan Sadhukhan gratefully acknowledges support from the Anusandhan National Research Foundation (ANRF) through the ANRF-National Post Doctoral Fellowship (N-PDF) for Mathematical Sciences, File No. PDF/2025/001644. 

\section*{Declaration of generative AI}
During the preparation of this work, the authors used ChatGPT 5.5 and 5.6 Sol pro models for exploratory mathematical discussions, language editing,
and assistance in checking arguments. The authors independently
reconstructed and verified every proof, reviewed and edited all
AI-assisted material, and take full responsibility for the content
of the manuscript.

 %\section{Data Availability}
 %Not applicable.
%\section{Declarations}\textbf{Conflict of interest:} The authors declare that they have no conflict of interest to declare that are relevant to the content of this
%article.
%relationships that could have appeared to influence the work reported in this paper.

\bibliography{references}

\newpage

\end{document}